\newtheorem{theorem}{Theorem}[section]
\newtheorem{lemma}[theorem]{Lemma}
\newtheorem{definition}[theorem]{Definition}
\theoremstyle{definition}
\begin{document}
%%%%%%%%%%%%%%%%%%%%%%%%%%%%%%%%%%%%%%%%%%%%%%%%%%%%%%%%%%%%%%%%%%%%%%%%%%%%%%%%%%%%%%%%%%%%%%%%%%%%%%%%%%%%%%%%%%%%%%%%%%%%%

\title{\textbf{{Moments of inverses of $(m,n,\beta)$-Laguerre matrices}}}
\author{Sushma Kumari  \\ { \small Department of Mathematics, Kyoto University} \\ {\footnotesize kumari93@math.kyoto-u.ac.jp}}
\date{}
\maketitle
%%%%%%%%%%%%%%%%%%%%%%%%%%%%%%%%%%%%%%%%%%%%%%%%%%%%%%%%%%%%%%%%%%%%%%%%%%%%%%%%%%%%%%%%%%%%%%%%%%%%%%%%%%%%%%%%%%%%%%%%%%%%%
\begin{abstract}
Wishart matrices are one of the fundamental matrix models in multivariate statistics. We consider the classical $(m,n,\beta)$-Laguerre ensemble and give a necessary and sufficient condition for finite moments for the inverse of $(m,n,\beta)$-Laguerre matrices to exist. We extend the result to inverse compound Wishart matrices for the values of $\beta = 1$ and $2$. Our result complements the result by Letac and Massam \cite{Letac_Massam_2004}, Matsumoto \cite{Matsumoto_2012} and Collins et al. \cite{Collins_Matsumoto_Saad_2014}.

\end{abstract}
%%%%%%%%%%%%%%%%%%%%%%%%%%%%%%%%%%%%%%%%%%%%%%%%%%%%%%%%%%%%%%%%%%%%%%%%%%%%%%%%%%%%%%%%%%%%%%%%%%%%%%%%%%%%%%%%%%%%%%%%%%%%%
%%%%%%%%%%%%%%%%%%%%%%%%%%%%%%%%%%%%%%%%%%%%%%%%%%%%%%%%%%%%%%%%%%%%%%%%%%%%%%%%%%%%%%%%%%%%%%%%%%%%%%%%%%%%%%%%%%%%%%%%%%%%
\section{Introduction}
\label{sec:introduction}
Real Wishart matrices were introduced by Wishart \cite{Wishart_1928} in 1928, while complex Wishart matrices were studied by Goodman in \cite{Goodman_1963}. Initially, the classical Wishart ensembles were studied only for the values of $\beta = 1,2$ and $4$ corresponding to real, complex and quaternion Wishart matrices respectively. Then, Dumitriu and Edelman \cite{Dumitriu_Edelman_2002}  generalised the classical Wishart models to tridiagonal matrix models for the general values of $\beta >0$. Another generalisation of the Wishart matrices, called compound Wishart matrices for the values $\beta = 1,2$ and $4$, was firstly studied by Speicher in \cite{Speicher_1998}.

Many properties of above random matrices such as the eigenvalues densities and moments play a very important role in various fields of mathematics and physics. The moments of Wishart and inverse Wishart matrices have been studied rigourously and the explicit formulas were given in \cite{Graczyk_Letac_Massam_2003} and \cite{Matsumoto_2012}. 

Letac and Massam \cite{Letac_Massam_2004} were the first one to compute all the general moments of Wishart and inverse Wishart matrices of the form  $\mathbb{E}(Q(S))$ and $\mathbb{E}(Q(S^{-1}))$ in both real and complex cases, where $Q$ is a polynomial depending only on the eigenvalues of these matrices. Later Matsumoto \cite{Matsumoto_2012}, computed all the general moments of Wishart and inverse Wishart matrices using Weingarten function. The expression for the moments of inverse compound Wishart matrices was also obtained by Collins et al. in \cite{Collins_Matsumoto_Saad_2014}. 

In \cite{Graczyk_Letac_Massam_2003}, \cite{Letac_Massam_2004}, \cite{Matsumoto_2012} and \cite{Collins_Matsumoto_Saad_2014}, to compute the c-th moment of inverse of Wishart matrices, a sufficient condition such as $c < m-n+1$ for $\beta =2 $ (in complex case) or $c < (m-n+1)/{2}$  for $\beta =1$ (in real case) was assumed. Interestingly, it is not known whether this condition is necessary or not to have finite moments. This is the motivation for our work (refer to section \ref{sec:motivation}). In general, we consider the question for the broader class of inverse $(m,n,\beta)$-Laguerre matrices. 
 
In this paper, we present a necessary and sufficient condition to obtain finite moments for the inverse of $(m,n,\beta)$-Laguerre matrices and compound Wishart matrices. We can formulate our problem as follows: given a $(m,n,\beta)$-Laguerre matrix $S$, find a linear function $\displaystyle \ {g(m,n,\beta)}$ such that 
\begin{center}
$\ { \displaystyle \mathbb{E}\{ {\rm Tr}(S^{-c})\} < \infty \text{ \ \text{ if and only if} \ } c < g(m,n,\beta) }$
\end{center} 
$\ {\displaystyle \mathbb{E}\{ {\rm Tr}(S^{-c})\} }$ is the $c$-th moment of the matrix $S^{-1}$.

Our approach is very elementary in nature and we extend the result to inverse compound Wishart matrices. This paper is arranged as follows : in section 2, we give some basic definitions, notations and lemmas. In section 3, we present our result and further some conclusions in section 4.

Our main results can be phrased as follows :
\begin{theorem}
\label{thm:beta_moment}
Let $S$ be a $n \times n$ $(m,n,\beta)$-Laguerre matrix for $\beta > 0$. Then for any integer $c > 0$, 
\begin{align*}
\displaystyle \mathbb{E}\{{ \rm Tr }(S^{-c})\} \text{\  is finite if and only if \ } c < (m-n+1)\beta/{2} 
\end{align*}
\end{theorem}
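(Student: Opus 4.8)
The plan is to pass from the matrix statement to a one-dimensional integrability question via the joint eigenvalue density. Write the eigenvalues of $S$ as $\lambda_1,\dots,\lambda_n>0$, so that $\mathrm{Tr}(S^{-c})=\sum_{i=1}^n \lambda_i^{-c}$, and recall (from the density recorded in Section 2) that their joint law is proportional to
\[
\prod_{i<j}|\lambda_i-\lambda_j|^{\beta}\,\prod_{i=1}^{n}\lambda_i^{\,a}\,e^{-\lambda_i/2},\qquad a:=\tfrac{\beta}{2}(m-n+1)-1 .
\]
Because this density is symmetric under permutations of the $\lambda_i$, exchangeability gives $\mathbb{E}\{\mathrm{Tr}(S^{-c})\}=n\,\mathbb{E}[\lambda_1^{-c}]$, so the whole theorem reduces to deciding when the single integral
\[
\mathbb{E}[\lambda_1^{-c}]=C_{m,n,\beta}\int_{(0,\infty)^n}\lambda_1^{\,a-c}\,e^{-\lambda_1/2}\prod_{j\ge 2}|\lambda_1-\lambda_j|^{\beta}\;\prod_{2\le i<j}|\lambda_i-\lambda_j|^{\beta}\prod_{j\ge2}\lambda_j^{\,a}e^{-\lambda_j/2}\,d\lambda
\]
is finite. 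Everything is nonnegative, so I would work throughout with Tonelli's theorem and never worry about absolute convergence.

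Next I would isolate the single source of divergence. Large values of any $\lambda_i$ are harmless because of the Gaussian-type factor $e^{-\lambda_i/2}$; coincidences $\lambda_i=\lambda_j$ are harmless since $\beta>0$ makes $|\lambda_i-\lambda_j|^{\beta}$ vanish rather than blow up; and the approach $\lambda_j\to0$ for $j\ge2$ is integrable because $a=\tfrac{\beta}{2}(m-n+1)-1\ge \tfrac{\beta}{2}-1>-1$ when $m\ge n$. The only delicate region is $\lambda_1\to0$, where the factor $\prod_{j\ge2}|\lambda_1-\lambda_j|^{\beta}\to\prod_{j\ge2}\lambda_j^{\beta}$ is bounded and generically nonzero, so the integrand behaves like $\lambda_1^{\,a-c}$. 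A one-variable power count then predicts integrability near $0$ exactly when $a-c>-1$, i.e. $c<a+1=(m-n+1)\beta/2$, which is the claimed threshold.

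To turn this heuristic into a rigorous ``if and only if'' I would argue the two directions separately. For finiteness when $c<(m-n+1)\beta/2$, integrate in $\lambda_1$ first: split the inner integral at $\lambda_1=1$, bound $\prod_{j\ge2}|\lambda_1-\lambda_j|^{\beta}\le\prod_{j\ge2}(1+\lambda_j)^{\beta}$ on $(0,1)$ where $\int_0^1\lambda_1^{a-c}\,d\lambda_1<\infty$, and bound $\prod_{j\ge2}|\lambda_1-\lambda_j|^{\beta}\le\prod_{j\ge2}(\lambda_1+\lambda_j)^{\beta}$ on $(1,\infty)$ where the $e^{-\lambda_1/2}$ damps the polynomial growth. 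This yields an inner integral bounded by a fixed polynomial $P(\lambda_2,\dots,\lambda_n)$, and the remaining outer integral of $P$ against $\prod_{2\le i<j}|\lambda_i-\lambda_j|^{\beta}\prod_{j\ge2}\lambda_j^{a}e^{-\lambda_j/2}$ is a finite mixed moment of a Laguerre-type weight, hence finite. For divergence when $c\ge(m-n+1)\beta/2$, I would bound the integral below by restricting to a fixed compact box where $\lambda_2,\dots,\lambda_n$ are distinct and lie in $[1,2]$ and $\lambda_1\in(0,\tfrac12)$; on this box every factor except $\lambda_1^{\,a-c}$ is bounded below by a positive constant, so $\mathbb{E}[\lambda_1^{-c}]\ge C\int_0^{1/2}\lambda_1^{\,a-c}\,d\lambda_1=\infty$ precisely because $a-c\le-1$.

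I expect the main obstacle to be the upper bound: one must control the Vandermonde coupling $\prod_{j\ge2}|\lambda_1-\lambda_j|^{\beta}$ so that, after integrating out $\lambda_1$, the growth in the remaining variables is only polynomial and is absorbed by the exponential weights in the outer integral. The lower bound and the power-counting reduction are comparatively routine once the density and exchangeability are in place.
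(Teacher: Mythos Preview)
Your argument is correct, but it follows a genuinely different route from the paper. The paper first proves a separate gap-probability lemma, showing $\mathbb{P}(\lambda_{\min}<a)\sim C_{m,n}^{\beta}\,a^{\alpha}$ as $a\to 0$ via a change of variables and dominated convergence, and then converts $\mathbb{E}[\lambda_{\min}^{-c}]$ into the tail integral $c\int_0^\infty w^{-c-1}\mathbb{P}(\lambda_{\min}<w)\,dw$; the reduction to the smallest eigenvalue comes from the sandwich $\lambda_{\min}^{-c}\le \mathrm{Tr}(S^{-c})\le n\lambda_{\min}^{-c}$ for the \emph{ordered} eigenvalues. You instead reduce via exchangeability of the \emph{unordered} coordinates and attack the joint-density integral directly, splitting at $\lambda_1=1$ and controlling the Vandermonde coupling by the crude bound $|\lambda_1-\lambda_j|^{\beta}\le(1+\lambda_j)^{\beta}$. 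Your approach is more self-contained and bypasses the asymptotic lemma entirely; the paper's approach has the advantage that the gap-probability estimate is of independent interest and is immediately recyclable for the compound Wishart extension (where one only needs to sandwich $\mu_1$ between constant multiples of $\lambda_1$), whereas your direct integral argument would need to be redone with the new density.
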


\begin{theorem}
\label{thm:compound_moment}
Let $Q$  a $n \times n$ non-degenerate compound Wishart matrix for the values $\beta = 1$ or $2$. For any integer $c > 0$, we have
\begin{align*}
\mathbb{E}\{{ \rm Tr}(Q^{-c})\} \text{\  is finite if and only if \ } c < (m-n+1)\beta /2
\end{align*}
\end{theorem}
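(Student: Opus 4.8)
The plan is to reduce Theorem \ref{thm:compound_moment} to Theorem \ref{thm:beta_moment} by a deterministic eigenvalue comparison, exploiting the fact that finiteness of the inverse moments is governed entirely by how close the smallest eigenvalue can come to the origin, and that a non-degenerate shape matrix perturbs the spectrum only by bounded multiplicative constants. Write the non-degenerate compound Wishart matrix as $Q = G^{*} B G$, where $G$ is the $m \times n$ Gaussian matrix (real for $\beta = 1$, complex for $\beta = 2$) that also produces the $(m,n,\beta)$-Laguerre matrix $S = G^{*} G$, and where $B$ is the deterministic $m \times m$ shape matrix. Non-degeneracy means that $B$ is positive definite, so that $0 < \lambda_{\min}(B) \le \lambda_{\max}(B) < \infty$; note that the very existence of inverse moments forces $m \ge n$ (otherwise $S$ is singular), whence $S$ and $Q$ are almost surely invertible.

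First I would record the operator inequality obtained by inserting the spectral bounds on $B$ into the quadratic form $v^{*} G^{*} B G\, v = (Gv)^{*} B (Gv)$: for every vector $v$ one has $\lambda_{\min}(B)\, \|Gv\|^{2} \le (Gv)^{*} B (Gv) \le \lambda_{\max}(B)\, \|Gv\|^{2}$, which is exactly
\begin{align*}
\lambda_{\min}(B)\, S \preceq Q \preceq \lambda_{\max}(B)\, S
\end{align*}
in the Loewner order. By Weyl's monotonicity principle (Courant--Fischer), the correspondingly ordered eigenvalues then satisfy $\lambda_{\min}(B)\, \mu_{k}(S) \le \mu_{k}(Q) \le \lambda_{\max}(B)\, \mu_{k}(S)$ for each $k$; taking reciprocals, raising to the $c$-th power, and summing yields the sandwich
\begin{align*}
\lambda_{\max}(B)^{-c}\, {\rm Tr}(S^{-c}) \;\le\; {\rm Tr}(Q^{-c}) \;\le\; \lambda_{\min}(B)^{-c}\, {\rm Tr}(S^{-c}).
\end{align*}

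Taking expectations preserves both inequalities, so $\mathbb{E}\{{\rm Tr}(Q^{-c})\}$ and $\mathbb{E}\{{\rm Tr}(S^{-c})\}$ are finite or infinite together. Since $\beta \in \{1,2\}$ is covered by Theorem \ref{thm:beta_moment}, the latter is finite precisely when $c < (m-n+1)\beta/2$, which gives the stated threshold. Concretely, for the forward implication I would invoke the upper bound together with the finiteness half of Theorem \ref{thm:beta_moment}, and for the converse the lower bound together with its divergence half.

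The main obstacle I anticipate is definitional rather than analytic: the argument hinges on $B$ being spectrally bounded away from $0$ and $\infty$, so the crux is to confirm that ``non-degenerate'' indeed amounts to $B$ being positive definite and that the adopted convention for $Q$ shares the same Gaussian matrix $G$ as $S$. The reduction is however robust to the precise convention: if instead $Q = G B G^{*}$ with $G$ of size $n \times m$, the identical quadratic-form computation applies verbatim with $S = G G^{*}$. The restriction to $\beta \in \{1,2\}$ enters only because compound Wishart matrices are built from genuine real or complex Gaussian matrices, which is exactly the regime in which Theorem \ref{thm:beta_moment} supplies the required threshold.
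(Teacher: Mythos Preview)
Your proof is correct and rests on the same key step as the paper: the Loewner sandwich $\lambda_{\min}(B)\,S \preceq Q \preceq \lambda_{\max}(B)\,S$ (the paper writes this as $\xi_1 S \preceq Q \preceq \xi_m S$ after diagonalising $B$). The only difference is in how the reduction is cashed out: the paper passes from the sandwich to bounds on $\mathbb{P}(\mu_1<a)$ via Lemma~\ref{lem:gap_prob} and then reruns the integral computation for $\mathbb{E}\{\mu_1^{-c}\}$, whereas you compare all eigenvalues at once to obtain the two-sided trace bound $\lambda_{\max}(B)^{-c}\,{\rm Tr}(S^{-c}) \le {\rm Tr}(Q^{-c}) \le \lambda_{\min}(B)^{-c}\,{\rm Tr}(S^{-c})$ and invoke Theorem~\ref{thm:beta_moment} directly. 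Your route is a bit cleaner since it treats Theorem~\ref{thm:beta_moment} as a black box rather than reopening the tail-probability argument; the paper's route, on the other hand, makes the small-$a$ behaviour of $\mathbb{P}(\mu_1<a)$ explicit as a byproduct. Your reading of ``non-degenerate'' as $B$ positive definite matches the paper's standing assumption (Definition~\ref{def:compound_wishart}(ii)).
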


The proof of Theorem \ref{thm:beta_moment} and \ref{thm:compound_moment} is given in section \ref{sec:result}.
%%%%%%%%%%%%%%%%%%%%%%%%%%%%%%%%%%%%%%%%%%%%%%%%%%%%%%%%%%%%%%%%%%%%%%%%%%%%%%%%%%%%%%%%%%%%%%%%%%%%%%%%%%%%%%%%%%%%%%%%%%%%
%%%%%%%%%%%%%%%%%%%%%%%%%%%%%%%%%%%%%%%%%%%%%%%%%%%%%%%%%%%%%%%%%%%%%%%%%%%%%%%%%%%%%%%%%%%%%%%%%%%%%%%%%%%%%%%%%%%%%%%%%%%%
\section{Definitions and Notations}
\label{sec:Def_and_not}

Let $M_{m,n}$ denote the space of $m\times n$ matrices with entries from $\mathbb{R}$ or $\mathbb{C}$ depending on the context. $M_{n}$ is the space of $n\times n$ matrices. We write ${\rm Tr}(S)$ for the un-normalised trace of a matrix $S$.

\begin{definition}[Loewner Partial order ]
Let $A,B \in M_m$. We write $A \preceq B$ if $A$ and $B$ are Hermitian matrices and $B-A$ is a positive semidefinite. The order relation $"\preceq "$ is referred as Loewner partial order.
\end{definition}

\begin{lemma}(See theorem 7.7.2 in \cite{Horn_Johnson_1986})
Let $A,B \in M_m$ be two Hermitian matrices. Let $\sigma_1(A) \leq \sigma_2(A) \leq \dots \leq \sigma_m(A)$ and $\sigma_1(B) \leq \sigma_2(B) \leq \dots \leq \sigma_m(B)$ be the ordered eigenvalues of $A$ and $B$, respectively. If $A \preceq B$, then
\begin{enumerate}
\renewcommand{\labelenumi}{(\roman{enumi})}
\item $S^*AS \preceq S^*BS$ for  $S \in M_{m,n}$.
\item  $\sigma_i(A) \leq \sigma_{i}(B) $ for each $i = 1,\dots,m$.
\end{enumerate}
\end{lemma}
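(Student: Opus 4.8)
The plan is to prove the two conclusions separately, in both cases reducing everything to the defining property of the Loewner order: $A \preceq B$ means precisely that $x^*(B-A)x \ge 0$ for every vector $x$. For part (i) I would argue directly from this definition, with no need for a spectral decomposition or a square root of $B-A$. Fixing an arbitrary $x \in \mathbb{C}^n$ and setting $y = Sx \in \mathbb{C}^m$, one computes
\begin{align*}
x^*\left(S^*BS - S^*AS\right)x = x^*S^*(B-A)Sx = y^*(B-A)y \ge 0 .
\end{align*}
Since $x$ is arbitrary and $S^*BS - S^*AS$ is Hermitian (because $A$ and $B$ are), this is exactly the assertion $S^*AS \preceq S^*BS$. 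This step is essentially just unfolding the definitions.

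For part (ii) I would invoke the Courant--Fischer min--max characterization of eigenvalues. With the increasing ordering used in the statement,
\begin{align*}
\sigma_i(A) = \min_{\substack{V \subseteq \mathbb{C}^m \\ \dim V = i}} \ \max_{\substack{x \in V \\ x \ne 0}} \frac{x^*Ax}{x^*x} .
\end{align*}
The crucial observation is that $A \preceq B$ forces the Rayleigh quotients to be ordered pointwise, $\frac{x^*Ax}{x^*x} \le \frac{x^*Bx}{x^*x}$ for every $x \ne 0$; monotonicity of the inner maximum over each subspace $V$, and then of the outer minimum over all $i$-dimensional $V$, carries this inequality through the min--max and yields $\sigma_i(A) \le \sigma_i(B)$ for each $i$. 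An equivalent and even shorter route is Weyl's monotonicity theorem: writing $B = A + (B-A)$ and using $\sigma_{\min}(B-A) \ge 0$ gives $\sigma_i(B) \ge \sigma_i(A) + \sigma_{\min}(B-A) \ge \sigma_i(A)$.

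I do not expect a genuine obstacle here, since both assertions are classical; the one point that needs care is the indexing convention in Courant--Fischer. Because the lemma lists the eigenvalues in \emph{increasing} order, I must use the $\min_{\dim V = i}\max$ form rather than the $\max_{\dim V = m-i+1}\min$ form, so that the monotone comparison lands on $\sigma_i(A)$ and $\sigma_i(B)$ with matching indices rather than on $\sigma_{m-i+1}$. Inspecting the extreme cases $i=1$ (smallest eigenvalue equals the minimal Rayleigh quotient) and $i=m$ (largest eigenvalue, $V=\mathbb{C}^m$) confirms that this is the correct form.
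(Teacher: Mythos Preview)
Your proof is correct. Both parts are handled cleanly: part~(i) follows immediately from the definition of the Loewner order as you show, and for part~(ii) the Courant--Fischer min--max argument (with the correct indexing convention for increasingly ordered eigenvalues) is the standard route; the Weyl monotonicity alternative is also valid.

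There is nothing to compare against, however, because the paper does not supply its own proof of this lemma. It is stated as a quotation of Theorem~7.7.2 in Horn and Johnson's \emph{Matrix Analysis} and is used as a black box later in Section~\ref{sec:complex_compound_wishart}. Your argument is essentially the classical one found in that reference.
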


\begin{lemma}\cite{Ross_2006}
\label{positive_rv}
Let $Z$ be a positive random variable and let $g(z)$ be a measurable function of $z$. If $a$ is any real such that $\mathbb{P}(Z \geq a) = 1$, then
\begin{align*}
\displaystyle \mathbb{E}\{g(Z)\} = g(a) + \int_{a}^{\infty} g'(z) \mathbb{P}(Z>z) dz 
\end{align*}
\end{lemma}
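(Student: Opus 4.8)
The plan is to reduce the identity to the fundamental theorem of calculus followed by an interchange of the order of integration. Since $g'$ appears in the formula, $g$ should really be taken absolutely continuous on $[a,\infty)$ (a hypothesis I would state explicitly, as ``measurable'' alone is not enough for what follows). Granting this, the starting observation is that because $\mathbb{P}(Z \geq a)=1$, for almost every realization of $Z$ we have the pointwise identity
\[
g(Z) - g(a) = \int_a^Z g'(z)\,dz = \int_a^{\infty} g'(z)\,\mathbf{1}_{\{z < Z\}}\,dz,
\]
where the second equality merely rewrites the random upper limit $Z$ as an indicator inside a fixed-range integral, which is legitimate precisely because $Z \geq a$ almost surely.

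First I would take expectations on both sides. The right-hand side is then an iterated integral, over the sample space and over $z \in [a,\infty)$, so the heart of the matter is to swap these two integrations. When $g$ is nondecreasing the integrand $g'(z)\,\mathbf{1}_{\{z<Z\}}$ has a fixed sign and Tonelli's theorem applies directly; in the general signed case I would impose the integrability condition $\mathbb{E}\lvert g(Z)\rvert < \infty$ and invoke Fubini's theorem after splitting $g' = (g')^{+} - (g')^{-}$. Either way the exchange gives
\[
\mathbb{E}\{g(Z)\} - g(a) = \int_a^{\infty} g'(z)\,\mathbb{E}\bigl[\mathbf{1}_{\{z<Z\}}\bigr]\,dz = \int_a^{\infty} g'(z)\,\mathbb{P}(Z>z)\,dz,
\]
using $\mathbb{E}[\mathbf{1}_{\{z<Z\}}] = \mathbb{P}(Z>z)$, and rearranging yields exactly the claimed formula.

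The main obstacle, and essentially the only nontrivial point, is the justification of the interchange of integration; everything else is a one-line computation. For the applications in this paper one typically takes $g$ nondecreasing applied to a nonnegative random variable (for instance $Z = \mathrm{Tr}(S^{-c})$ with $a=0$, giving the classical tail-sum identity $\mathbb{E}\{Z\} = \int_0^{\infty}\mathbb{P}(Z>z)\,dz$), so that $g'\geq 0$ and Tonelli covers the case of interest with no extra integrability assumption. This is the feature that makes the lemma a genuine \emph{finiteness criterion}: both sides are allowed to equal $+\infty$ simultaneously, so the finiteness of $\mathbb{E}\{\mathrm{Tr}(S^{-c})\}$ can be read off from the decay of the tail probability. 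A minor secondary check is the absolute continuity of $g$ needed for the fundamental theorem of calculus in the first display; once that and the Tonelli/Fubini step are in place, the proof is complete.
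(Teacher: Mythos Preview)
The paper does not supply its own proof of this lemma; it is simply quoted with a reference to \cite{Ross_2006}. Your argument is the standard one and is correct: use absolute continuity to write $g(Z)-g(a)=\int_a^\infty g'(z)\,\mathbf{1}_{\{z<Z\}}\,dz$ almost surely, take expectations, and interchange the two integrations. Your observation that Tonelli (rather than Fubini) is the appropriate tool in the monotone case is exactly what the paper needs in its application, where $g(z)=z^{c}$ and $Z=\lambda_1^{-1}$, so that the identity remains meaningful even when both sides equal $+\infty$; this is precisely how the lemma is used to decide finiteness of $\mathbb{E}\{\lambda_1^{-c}\}$. Your caveat that ``measurable'' is too weak and that absolute continuity of $g$ is really required is also well taken and sharper than the stated hypothesis.
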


Let $K_{m,n}(\mathcal{F})$ be the collection of $m\times n$ random matrices with independent and identically distributed entries  following the distribution $\mathcal{F}$. Let $\mathcal{N}{(0,1)}$ and $\tilde{\mathcal{N}}(0,1)$ refer to the standard real and  complex Gaussian distribution respectively. $\tilde{\mathcal{N}}(0,1)$ is the standard Gaussian distribution of $(x+iy)/ \sqrt{2}$ where $x,y \sim {\mathcal{N}}\left(0,1 \right)$ and $x,y$ are independent. 

Let $A \in K_{m,n}(\tilde{\mathcal{N}}(0,1))$, then the $n \times n$ matrix of the type $P = A^{*}A$ is called a standard \emph{complex Wishart} matrix. Similarly, we say $P$ is a standard \emph{real Wishart} matrix if $ A \in K_{m,n}({\mathcal{N}}(0,1) )$.

\begin{definition}[$(m,n,\beta)$-Laguerre matrix](Dumitriu and Edelman \cite{Dumitriu_Edelman_2002})
Let $X$ be a bidiagonal matrix with mutually independent diagonal and subdiagonal entries with the following distribution
   \begin{align*}
	\displaystyle X \sim 
	\begin{pmatrix}
	\chi_{ m \beta} & \chi_{(n-1)\beta} & & & \\
	& \chi_{(m-1)\beta}& \chi_{(n-2)\beta}& & \\
	& \ddots & \ddots & & \\
	& & \chi_{(m-n+2)\beta} & \chi_{\beta}& \\
	& &  & \chi_{(m-n+1) \beta}&
	\end{pmatrix}
	\end{align*}
The tridiagonal matrix $S = X^*X$ is called a \emph{$(m,n,\beta)$-Laguerre matrix} and $S^{-1}$ denote the inverse of $(m,n,\beta)$-Laguerre matrix $S$. Here, $\chi_{s}$ is the chi distribution with parameter $s$.
\end{definition}
For the values $\beta = 1 \text{\ and \ } 2$, the eigenvalue distribution of a standard real and complex Wishart matrix is same as the eigenvalue distribution of a $(m,n,1)$-Laguerre and  $(m,n,2)$-Laguerre matrix respectively.
 
Let $m \geq n$. The matrix $S$ has $n$ real eigenvalues whose joint probability density function is stated below.

\begin{theorem}[Joint eigenvalue density \cite{Dumitriu_Edelman_2002}]
\label{jpdf_eigenvalues} 
Let $0 < \lambda_{1} \leq \lambda_2 \leq \dots \leq \lambda_{n}$ be the $n$ eigenvalues of $(m,n,\beta)$-Laguerre matrix $S$ for $\beta > 0$. The joint eigenvalue density function is given as : 
\begin{align}
\label{eqn:jpdf_eigenvalues} 
\displaystyle h_{\beta}(\lambda_1,\lambda_2,\dots,\lambda_n) \  = \  Z_{m,n}^{\beta} \prod_{i=1}^{n}\lambda_{i}^{\alpha -1} e^{\left(- \frac{1}{2}\sum_{i=1}^{n}\lambda_i\right)} \prod_{k<j}(\lambda_{j} - \lambda_{k})^{\beta}
\end{align}
where $\alpha = (m-n+1)\beta/2$. $Z_{m,n}^{\beta} $ is a normalisation constant that can be computed explicitly:
\begin{align*}
\displaystyle  Z_{m,n}^{\beta} \ = \  2^{-mn\beta/2} \prod_{j=1}^{n}  \frac{ \Gamma{\left(1+\frac{\beta}{2}\right)} }{\Gamma{\left(1+\frac{\beta }{2}j\right)} \Gamma{\left( \frac{\beta}{2}(m-n+j)\right)}}
\end{align*}
\end{theorem}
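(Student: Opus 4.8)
The plan is to obtain \eqref{eqn:jpdf_eigenvalues} by pushing the explicit joint law of the independent chi-entries of $X$ forward to the eigenvalues of $S=X^*X$ through a change of variables to spectral coordinates. Write $d_1,\dots,d_n$ for the diagonal entries of $X$ (so $d_i\sim\chi_{(m-i+1)\beta}$) and $e_1,\dots,e_{n-1}$ for the superdiagonal entries ($e_i\sim\chi_{(n-i)\beta}$); by independence their joint density is proportional to
$$\prod_{i=1}^n d_i^{(m-i+1)\beta-1}\prod_{i=1}^{n-1}e_i^{(n-i)\beta-1}\exp\Big(-\tfrac12\sum_{i}d_i^2-\tfrac12\sum_i e_i^2\Big).$$
The first thing I would note is that the Gaussian factor is already symmetric in the spectrum: since $\sum_i d_i^2+\sum_i e_i^2=\mathrm{Tr}(X^*X)=\mathrm{Tr}(S)=\sum_i\lambda_i$, it collapses to $e^{-\frac12\sum_i\lambda_i}$, which is exactly the exponential in \eqref{eqn:jpdf_eigenvalues}.

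The next step is to tame the $i$-dependent powers, which are not symmetric as written. Let $b_i=S_{i,i+1}=d_ie_i$ be the off-diagonal entries of $S$ and let $D_k$ denote its $k$-th leading principal minor; because $X$ is upper bidiagonal one has $D_k=\prod_{i\le k}d_i^2$, hence $d_i^2=D_i/D_{i-1}$ with $D_0=1$. I would substitute this and telescope the exponents: the diagonal product becomes $(\det S)^{\alpha}\prod_{k=1}^{n-1}D_k^{\beta/2}$ with $\alpha=(m-n+1)\beta/2$, while the off-diagonal product becomes $\prod_{i=1}^{n-1}b_i^{(n-i)\beta}\prod_{k=1}^{n-1}D_k^{-\beta/2}$. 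The principal-minor factors cancel, leaving the clean combination $(\det S)^{\alpha}\prod_{i=1}^{n-1}b_i^{(n-i)\beta}$.

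I would then introduce spectral coordinates: the eigenvalues $\lambda=(\lambda_1,\dots,\lambda_n)$ together with the first-row eigenvector components $q=(q_1,\dots,q_n)$, with $q_i>0$ and $\sum_i q_i^2=1$ (the weights of the spectral measure of $S$ at $e_1$). The identity that makes everything work is the Jacobi-matrix relation
$$\prod_{i=1}^{n-1}b_i^{\,n-i}=|\Delta(\lambda)|\prod_{i=1}^n q_i,\qquad \Delta(\lambda)=\prod_{k<j}(\lambda_j-\lambda_k),$$
which follows from evaluating the Hankel determinant of the moments of the discrete measure $\sum_k q_k^2\,\delta_{\lambda_k}$. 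Raising it to the power $\beta$ turns $\prod_i b_i^{(n-i)\beta}$ into $|\Delta(\lambda)|^{\beta}\prod_i q_i^{\beta}$, so the main factor becomes $(\prod_i\lambda_i)^{\alpha}\,|\Delta(\lambda)|^{\beta}\prod_i q_i^{\beta}$, already exhibiting the Vandermonde with the correct exponent $\beta$.

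It remains to account for the Jacobian of the full map $(d,e)\mapsto(\lambda,q)$, whose Jacobi-matrix part is proportional to $|\Delta(\lambda)|/\prod_i q_i$, together with the residual factor $\prod_i d_i^{-1}\prod_i e_i^{-1}$ coming from the $-1$'s in the chi exponents. Combining these should cancel every non-symmetric quantity, shift $(\prod_i\lambda_i)^{\alpha}$ down to $(\prod_i\lambda_i)^{\alpha-1}$, and reduce the $q$-dependence to $\prod_i q_i^{\beta-1}$; integrating this over the positive part of the sphere is, after the substitution $u_i=q_i^2$, the Dirichlet integral $\int_{\Delta_{n-1}}\prod_i u_i^{\beta/2-1}\,du=\Gamma(\beta/2)^n/\Gamma(n\beta/2)$, a constant. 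What survives is exactly $\prod_i\lambda_i^{\alpha-1}e^{-\frac12\sum_i\lambda_i}\prod_{k<j}(\lambda_j-\lambda_k)^{\beta}$. The hard part will be the explicit Jacobian of the bidiagonal-to-spectral map and the verification that it cancels the stray principal-minor and $\prod_i b_i$ terms precisely; the exponent telescoping and the Hankel-determinant identity are what force the Vandermonde to appear with exponent exactly $\beta$, and careful bookkeeping of the chi normalizers, the Jacobians, and the Dirichlet integral is what pins down $Z_{m,n}^{\beta}$ rather than mere proportionality.
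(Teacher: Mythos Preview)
The paper does not supply a proof of this theorem at all: it is quoted verbatim from Dumitriu and Edelman \cite{Dumitriu_Edelman_2002} and used only as input for Lemma~\ref{lem:gap_prob}. So there is nothing in the paper to compare your argument against.

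That said, your outline is precisely the original Dumitriu--Edelman argument: pass from the independent chi entries of the bidiagonal $X$ to the spectral data $(\lambda,q)$ of the Jacobi matrix $S=X^*X$, use $\sum d_i^2+\sum e_i^2=\mathrm{Tr}\,S$ for the exponential, telescope the chi exponents through the leading principal minors to extract $(\det S)^{\alpha}$ and $\prod b_i^{(n-i)\beta}$, invoke the Hankel/Jacobi identity $\prod_i b_i^{\,n-i}=|\Delta(\lambda)|\prod_i q_i$ to produce $|\Delta(\lambda)|^{\beta}$, and then integrate out the decoupled $q$-variables via a Dirichlet integral. Your bookkeeping of the telescoping (the $D_k^{\pm\beta/2}$ cancellation) and of $b_i=d_ie_i$, $D_k=\prod_{i\le k}d_i^2$ is correct. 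The only place to be careful is the exact form of the Jacobian of the bidiagonal $\to(\lambda,q)$ map and how the stray $\prod d_i^{-1}\prod e_i^{-1}$ combine with it; in Dumitriu--Edelman this is their Lemma on the tridiagonal Jacobian, and once that is plugged in the residual factors indeed collapse to $\prod_i\lambda_i^{-1}$ and $\prod_i q_i^{\beta-1}$ as you anticipate. So your plan is correct and is the standard proof; it simply is not reproduced in this paper.
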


\begin{definition}
\label{def:compound_wishart}
\begin{enumerate}
\renewcommand{\labelenumi}{(\roman{enumi})}
\item Let $\Sigma$ be a $n \times n$ positive definite Hermitian matrix and $B$ be a $m \times m$ complex matrix. Let $X $ be the $m \times n$ standard complex Wishart matrix. Then,  
\begin{align*}
\displaystyle Q \ = \  \sqrt{\Sigma}X^{*}BX\sqrt{\Sigma}
\end{align*}
is called a complex $\emph{compound Wishart}$ matrix.

\item 
If $B$ is a positive-definite Hermitian matrix then the matrix $B$ has an eigenvalue decomposition i.e. $B  = UDU^*$, where $U $ is an unitary matrix consisting of eigenvectors of $B$ and $D = diag({\xi_1,\xi_2,\dots,\xi_m}) \text{\ such that \ } 0< \xi_1 \leq \xi_2 \leq \dots \leq \xi_m$. So, $Q$ has the same distribution as $X^{*}DX$.
\end{enumerate}

\end{definition}

The real compound Wishart matrices can be defined analogously. 
%%%%%%%%%%%%%%%%%%%%%%%%%%%%%%%%%%%%%%%%%%%%%%%%%%%%%%%%%%%%%%%%%%%%%%%%%%%%%%%%%%%%%%%%%%%%%%%%%%%%%%%%%%%%%%%%%%%%%%%%%%%%%
\subsection{Motivation}
\label{sec:motivation}
Let us state the moment formula for the inverse of a complex Wishart matrix as given by Graczyk et. al. in \cite{Graczyk_Letac_Massam_2003}.

Let $\mathcal{S}_q$ be the symmetric group defined on $[q] \ = \ \{1,2,\dots,q\}$ for the positive integer $q$. Every permutation $\sigma \in \mathcal{S}_q$ can be decomposed into cycles with length $(\eta_1,\eta_2,\dots,\eta_l)$. If $\eta_1 \geq \eta_2 \geq \dots \geq \eta_l$ then $\eta = (\eta_1,\eta_2,\dots,\eta_l)$ is the partition of $q$. Let $p(\eta)$ be the length of $\eta$. For a matrix $A$ and $\sigma \in \mathcal{S}_{q}$, define	
\begin{align*}
{\rm Tr }_{\sigma}(A) \ = \ \prod_{i=1}^{l} { \rm Tr}(A^{\eta_i}) 
\end{align*}
Let $\sigma \in \mathcal{S}_q $ and $z \in \mathbb{C}$. Then the unitary Weingarten function can be defined as
\begin{align*}
\displaystyle {\rm Wg}(\sigma,z) \ = \ \frac{1}{q!} \ {\underset{\eta}{\mathlarger{\sum}}} \  \frac{\chi^{\eta}(e)}{ \prod_{i=1}^{p(\eta)} \prod_{j=1}^{\eta_i} (z + j - i)} \chi^{\eta}(\sigma)
\end{align*}
where the $\chi^{\lambda}$ are irreducible characters in $\mathcal{S}_q$.

\begin{theorem}[\cite{Graczyk_Letac_Massam_2003}]
\label{thm:moment_letac}
Let $S$ be a $n \times n$ complex Wishart matrix and  $\pi \in \mathcal{S}_{c}$. If $c < (m -n + 1)$,
\begin{align*}
\displaystyle \mathbb{E}\{ { \rm Tr}_{\pi}(S^{-1}) \} = \ (-1)^{c} \sum_{\sigma \in \mathcal{S}_{c}} {\rm Wg}(\pi {\sigma}^{-1};n-m)Tr_{\sigma}(I)
\end{align*}
where $I$ is the $n \times n$ identity matrix and ${ \rm Wg}(\pi \sigma^{-1};n-m) $ is the unitary Weingarten function.
\end{theorem}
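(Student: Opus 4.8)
The plan is to specialize to $\beta = 2$ (the complex Wishart case) and reduce the matrix moment to an integral against the joint eigenvalue density of Theorem~\ref{jpdf_eigenvalues}, which for $\beta=2$ has $\alpha - 1 = m-n$. Writing the cycle type of $\pi$ as $\eta = (\eta_1,\dots,\eta_l)$, the integrand $\mathrm{Tr}_\pi(S^{-1}) = \prod_{k=1}^l \mathrm{Tr}(S^{-\eta_k})$ is the power-sum symmetric function $p_\eta(\lambda_1^{-1},\dots,\lambda_n^{-1})$ in the inverse eigenvalues, so
\begin{align*}
\mathbb{E}\{\mathrm{Tr}_\pi(S^{-1})\} = Z_{m,n}^{2}\int_{(0,\infty)^n} p_\eta(\lambda^{-1})\prod_{i=1}^n \lambda_i^{\,m-n}\, e^{-\frac12\sum_i \lambda_i}\prod_{k<j}(\lambda_j-\lambda_k)^2\, d\lambda.
\end{align*}
I would first settle finiteness: near $\lambda_i\to 0$ the weight behaves like $\lambda_i^{m-n}$, so the factor $\lambda_i^{-\eta_k}$ coming from a cycle of length $\eta_k$ is integrable exactly when $m-n-\eta_k > -1$. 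The binding constraint is the longest cycle, so the hypothesis $c < m-n+1$ is precisely what makes every term finite.

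For the evaluation I would pass from power sums to Schur functions via the Frobenius characteristic map, $p_\eta = \sum_{\mu\vdash c}\chi^\mu(\pi)\,s_\mu$; this is the step that introduces the irreducible characters $\chi^\mu$ featuring in $\mathrm{Wg}$. It then suffices to prove the key averaging identity
\begin{align*}
\mathbb{E}\{s_\mu(\lambda^{-1})\} = (-1)^c\,\frac{s_\mu(1^n)}{\prod_{(i,j)\in\mu}(n-m+j-i)},
\end{align*}
where $s_\mu(1^n)$ denotes the Schur polynomial evaluated at $n$ ones. To obtain this I would combine the bialternant numerator of $s_\mu$ with one factor of the Vandermonde $\prod_{k<j}(\lambda_j-\lambda_k)$ and apply Andr\'eief's identity to collapse the $n$-fold integral into a single $n\times n$ determinant whose entries are one-dimensional integrals $\int_0^\infty \lambda^{\,m-n-k}e^{-\lambda/2}\,d\lambda$ with integer shifts $k$ determined by $\mu$. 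Evaluating these in terms of $\Gamma$ and using the reflection formula to handle the negative exponents produced by the inverse eigenvalues is what turns the parameter $m-n$ into the argument $n-m$, generates the content denominator $\prod_{(i,j)\in\mu}(n-m+j-i)$, and supplies the global sign $(-1)^c$.

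Finally I would reassemble the character side. Observing that $\mathrm{Tr}_\sigma(I) = n^{p(\sigma)} = \sum_{\mu\vdash c}\chi^\mu(\sigma)\,s_\mu(1^n)$ and substituting the definition of $\mathrm{Wg}(\pi\sigma^{-1};n-m)$, the sum over $\sigma\in\mathcal{S}_c$ can be carried out using the character convolution identity
\begin{align*}
\sum_{\sigma\in\mathcal{S}_c}\chi^\nu(\pi\sigma^{-1})\chi^\mu(\sigma) = \frac{c!}{\chi^\mu(e)}\,\delta_{\mu\nu}\,\chi^\mu(\pi).
\end{align*}
This collapses the right-hand side of the claimed formula to $(-1)^c\sum_{\mu\vdash c}\chi^\mu(\pi)\,s_\mu(1^n)\big/\prod_{(i,j)\in\mu}(n-m+j-i)$, which matches $\sum_{\mu}\chi^\mu(\pi)\,\mathbb{E}\{s_\mu(\lambda^{-1})\}$ term by term through the averaging identity above.

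The main obstacle I anticipate is the middle step: correctly evaluating the Schur-weighted Laguerre integrals and verifying that the Gamma-function ratios organize into exactly the content denominator $\prod_{(i,j)\in\mu}(n-m+j-i)$ with a single sign $(-1)^c$. The convergence analysis at the start and the closing character orthogonality are comparatively routine; the delicate accounting is in matching the analytically continued Gamma ratios to the representation-theoretic product that defines the Weingarten function.
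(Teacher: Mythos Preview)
The paper does not prove Theorem~\ref{thm:moment_letac} at all: it is quoted verbatim from \cite{Graczyk_Letac_Massam_2003} in the Motivation subsection precisely to illustrate that the hypothesis $c < m-n+1$ is routinely \emph{assumed} in the literature. The paper's own contributions (Lemma~\ref{lem:gap_prob}, Theorems~\ref{thm:beta_moment} and~\ref{thm:compound_moment}) address only the finiteness question, never the explicit evaluation. So there is no in-paper argument to compare your sketch against; you have written an outline of what a proof in the original source might look like, not a reconstruction of anything in this manuscript.

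As a standalone sketch your plan is broadly in the right spirit (Schur expansion, Andr\'eief, character orthogonality), but your finiteness paragraph contains an error worth flagging. You write that ``the binding constraint is the longest cycle''. It is not: since $\mathrm{Tr}_\pi(S^{-1}) = \prod_k \mathrm{Tr}(S^{-\eta_k}) = \prod_k \sum_i \lambda_i^{-\eta_k}$, the expansion contains the diagonal term $\lambda_1^{-(\eta_1+\cdots+\eta_l)} = \lambda_1^{-c}$, and the Vandermonde factor does not vanish when a single $\lambda_1 \to 0$ with the remaining eigenvalues bounded away from zero. Hence integrability at that edge requires $m-n-c > -1$, i.e.\ $c < m-n+1$; the constraint is governed by the total degree $c$, not by $\max_k \eta_k$. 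Your stated threshold is correct, but the justification you give for it would, taken literally, yield the weaker condition $\eta_1 < m-n+1$.
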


From theorem $\ref{thm:moment_letac}$, it can be seen that $c < (m-n+1)$ is assumed to define the moments of the inverse  of a complex Wishart matrix. Similar condition has been assumed in $\cite{Letac_Massam_2004}$, \cite{Matsumoto_2012} and \cite{Collins_Matsumoto_Saad_2014} while working with the moments of the inverses of real and complex Wishart matrices. Our aim is to investigate the necessity of this condition. 
%%%%%%%%%%%%%%%%%%%%%%%%%%%%%%%%%%%%%%%%%%%%%%%%%%%%%%%%%%%%%%%%%%%%%%%%%%%%%%%%%%%%%%%%%%%%%%%%%%%%%%%%%%%%%%%%%%%%%%%%%%%%
%%%%%%%%%%%%%%%%%%%%%%%%%%%%%%%%%%%%%%%%%%%%%%%%%%%%%%%%%%%%%%%%%%%%%%%%%%%%%%%%%%%%%%%%%%%%%%%%%%%%%%%%%%%%%%%%%%%%%%%%%%%%
\section{Results}
\label{sec:result}

Let  $ 0 < \lambda_{1} \leq \dots \leq \lambda_n$ be the ordered eigenvalues of $(m,n,\beta)$-Laguerre matrix $S$, so $\lambda_1$ is the smallest eigenvalue of $S$. We first present a result on the gap probability of smallest eigenvalue of $S$ near zero. The term \emph{\lq gap probability at zero'} means the probability that no eigenvalue of the matrix lies in the  neighbourhood of zero. Let $a>0$,
\begin{align*}
\displaystyle \mathbb{P}(\text{no eigenvalues} \in (0,a)) \ = \ & \ \mathbb{P}(\lambda_1 \notin (0,a)) \ \   \ \\
\displaystyle 	\ = \ & \   1 -  \mathbb{P}(\lambda_1 < a)  \ 
\end{align*}

\begin{lemma}
\label{lem:gap_prob}
Let $S$ be a $(m,n,\beta)$-Laguerre matrix, 
\begin{align}
\displaystyle	\mathbb{P}(\lambda_1 < a) \ \underset{a\rightarrow 0}{\approx} \ C^{\beta}_{m,n} a^{\alpha}  \ \ 
\end{align}
where the constant $C^{\beta}_{m,n}$ is non-zero and depends only on $m,n$ and $\beta$ and $\alpha = (m-n+1)\beta/2$.
\end{lemma}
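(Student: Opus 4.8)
The plan is to integrate the joint eigenvalue density of Theorem~\ref{jpdf_eigenvalues} directly and extract the leading power of $a$ as $a\to 0$. Writing the probability as an ordered iterated integral over the chamber $0<\lambda_1\le\lambda_2\le\cdots\le\lambda_n$,
\begin{align*}
\mathbb{P}(\lambda_1 < a) = Z_{m,n}^{\beta} \int_0^a d\lambda_1 \int_{\lambda_1}^{\infty} d\lambda_2 \cdots \int_{\lambda_{n-1}}^{\infty} d\lambda_n \, \prod_{i=1}^{n}\lambda_i^{\alpha-1} e^{-\frac{1}{2}\sum_i \lambda_i} \prod_{k<j}(\lambda_j-\lambda_k)^{\beta},
\end{align*}
I would separate the dependence on the smallest eigenvalue $\lambda_1$ from the remaining variables. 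Splitting the Vandermonde factor as $\prod_{k<j}(\lambda_j-\lambda_k)^{\beta} = \prod_{j=2}^{n}(\lambda_j-\lambda_1)^{\beta}\cdot\prod_{2\le k<j}(\lambda_j-\lambda_k)^{\beta}$ isolates the factor $\lambda_1^{\alpha-1}e^{-\lambda_1/2}$, which carries all of the singular behaviour as $\lambda_1\to 0$.

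The next step is to define the reduced integral over $\lambda_2,\dots,\lambda_n$ with $\lambda_1$ held fixed,
\begin{align*}
F(\lambda_1) = \int_{\lambda_1}^{\infty} d\lambda_2 \cdots \int_{\lambda_{n-1}}^{\infty} d\lambda_n \, \prod_{j=2}^{n}(\lambda_j-\lambda_1)^{\beta} \prod_{i=2}^{n}\lambda_i^{\alpha-1} e^{-\lambda_i/2} \prod_{2\le k<j}(\lambda_j-\lambda_k)^{\beta},
\end{align*}
so that $\mathbb{P}(\lambda_1<a)=Z_{m,n}^{\beta}\int_0^a \lambda_1^{\alpha-1}e^{-\lambda_1/2}F(\lambda_1)\,d\lambda_1$. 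If $F$ is continuous at $0$ with $F(0)$ finite and nonzero, then using $e^{-\lambda_1/2}=1+O(\lambda_1)$ the integrand behaves like $F(0)\,\lambda_1^{\alpha-1}(1+o(1))$, and integrating from $0$ to $a$ yields $\mathbb{P}(\lambda_1<a)\sim Z_{m,n}^{\beta}F(0)\,a^{\alpha}/\alpha$. This identifies the exponent as $\alpha=(m-n+1)\beta/2$ and the constant as $C_{m,n}^{\beta}=Z_{m,n}^{\beta}F(0)/\alpha$.

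The main obstacle is to justify $F(\lambda_1)\to F(0)$ as $\lambda_1\to 0^{+}$ together with $0<F(0)<\infty$. For the limit I would invoke dominated convergence: on the ordered chamber one has $0\le\lambda_1\le\lambda_j$, hence $(\lambda_j-\lambda_1)^{\beta}\le\lambda_j^{\beta}$, so the integrand of $F(\lambda_1)$ is bounded above by its value at $\lambda_1=0$ on the enlarged region $\{0\le\lambda_2\le\cdots\le\lambda_n\}$, while the ordering indicator and the factors $(\lambda_j-\lambda_1)^{\beta}$ converge pointwise almost everywhere to their $\lambda_1=0$ values. Finiteness of the dominating integral
\begin{align*}
F(0) = \int_{0\le\lambda_2\le\cdots\le\lambda_n} \prod_{i=2}^{n}\lambda_i^{\alpha+\beta-1} e^{-\lambda_i/2} \prod_{2\le k<j}(\lambda_j-\lambda_k)^{\beta}\, d\lambda_2\cdots d\lambda_n
\end{align*}
is exactly convergence of the normalizing integral of a $(m+1,n-1,\beta)$-Laguerre ensemble: the exponential factors control the tails at infinity, and near the origin each exponent satisfies $\alpha+\beta-1>-1$ (since $\alpha>0$ and $\beta>0$), guaranteeing integrability. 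Positivity of $F(0)$ is immediate because the integrand is strictly positive on an open subset of the chamber. Assembling these facts gives the claimed asymptotic $\mathbb{P}(\lambda_1<a)\approx C_{m,n}^{\beta}a^{\alpha}$ with $C_{m,n}^{\beta}\ne 0$ depending only on $m,n,\beta$.
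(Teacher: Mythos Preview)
Your argument is correct and follows the same overall strategy as the paper: start from the joint eigenvalue density of Theorem~\ref{jpdf_eigenvalues}, isolate the singular factor $\lambda_1^{\alpha-1}$, and pass to the limit in the remaining integral via dominated convergence. The difference is in the bookkeeping. The paper performs an explicit change of variables $(\lambda_1,\lambda_2,\ldots,\lambda_n)=(ax_1,ax_1+x_2,\ldots,ax_1+\cdots+x_n)$, which pulls the factor $a^{\alpha}$ outside and leaves an $a$-dependent integrand $f_a$ on a fixed product domain; dominating $f_a$ uniformly in $a$ then requires several ad~hoc estimates on the polynomial factors. You instead keep the original variables, define the conditional integral $F(\lambda_1)$, and observe that the integrand of $F(\lambda_1)$ is monotone in $\lambda_1$ (since $(\lambda_j-\lambda_1)^{\beta}\le\lambda_j^{\beta}$ and the ordering indicator only grows as $\lambda_1\downarrow 0$), so the $\lambda_1=0$ integrand itself serves as the dominating function. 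This is cleaner, and your identification of $F(0)$ with the partition function of an $(m+1,n-1,\beta)$-Laguerre ensemble is a nice structural observation that the paper does not make; it gives finiteness of the dominating integral for free, whereas the paper checks integrability by hand. Both routes yield the same conclusion $\mathbb{P}(\lambda_1<a)\sim C_{m,n}^{\beta}a^{\alpha}$ with a strictly positive constant.
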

\begin{proof}
From \eqref{eqn:jpdf_eigenvalues}, we have
\begin{align*}
\displaystyle   \mathbb{P}(\lambda_1 < a) \ = \ & \ Z_{m,n}^{\beta} \int_{0}^{a} \int_{\lambda_1}^{\infty} \dots \int_{\lambda_{n-1}}^{\infty}\prod_{i=1}^{n}\left(\lambda_{i}^{\alpha -1} \right)\prod_{k<j}(\lambda_{j} - \lambda_{k})^{\beta} e^{\left(-\frac{1}{2}\sum_{i=1}^{n}\lambda_i\right)} d\lambda_{n} \dots d\lambda_1 
\end{align*}
Using change of variable, $(\lambda_1,\lambda_2,\dots,\lambda_n) = (a x_1,a x_1 + x_2,\dots,a x_1 + x_2 + \dots x_n)$,
\begin{align*}
\displaystyle \mathbb{P}(\lambda_1 < a) \ = \ & \ Z_{m,n}^{\beta} a^{\alpha}  \int_{0}^{1} \int_{0}^{\infty} \dots \int_{0}^{\infty}
f_{a}(x_1,x_2,\dots,x_n)dx_{n} \dots dx_1 
\end{align*}
where, 
\begin{align*}
f_{a}(x_1,x_2,\dots,x_n) \ = \ x_{1}^{\alpha-1}  e^{(-nx_1 a/2)} \prod_{i=2}^{n}\left(ax_1 + \sum_{j=2}^{i} x_j\right)^{\alpha-1}  \prod_{j=2}^{n} x_{j}^{\beta} \prod_{j=2}^{n} e^{-(n-j+1)x_j /2} \\ \prod_{k=3}^{n}\left( \prod_{i=1}^{k-2} \left( \sum_{j=i+1}^{k} x_j \right)^{\beta} \right)
\end{align*}
It follows,
\begin{align*}
\lim_{a \rightarrow 0} \  f_{a}(x_1,\dots,x_n) \  = \ &  \ f(x_1,\dots,x_n) \ \\
 \ = \ & \ x_{1}^{\alpha-1}\prod_{i=2}^{n}\left(\sum_{j=2}^{i} x_j\right)^{\alpha-1} \prod_{k=3}^{n}\left( \prod_{i=1}^{k-2} \left( \sum_{j=i+1}^{k} x_j \right)^{\beta} \right) \prod_{j=2}^{n} x_{j}^{\beta}e^{-(n-j+1)x_j / 2} 
\end{align*}
We next find a dominating function for $f_a(x_1,\dots,x_n)$. By some straightforward calculations, we have
\begin{enumerate}
\renewcommand{\labelenumi}{(\roman{enumi})}
\item $ 
\!
\begin{aligned}[t]
  \prod_{i=2}^{n}\left(ax_1 + \sum_{j=2}^{i} x_j\right)^{\alpha-1} \  \leq \ & \ \prod_{j=2}^{n} {x_j}^{-1} \ \prod_{i=2}^{n}\left(1 + \sum_{j=2}^{i} x_j\right)^{\alpha}  \ \\
 \ \leq  \  & \ \prod_{j=2}^{n}{x_j}^{-1} \left(1 + \sum_{j=2}^{n} x_j\right)^{(n-1)\alpha}  
\end{aligned}
$
\item $ 
\!
\begin{aligned}[t]
\prod_{k=3}^{n}\left( \prod_{i=1}^{k-2} \left( \sum_{j=i+1}^{k} x_j \right)^{\beta} \right) \ \leq \ & \ \prod_{k=3}^{n}\left( \prod_{i=1}^{k-2} \left( 1 + \sum_{j=2}^{n} x_j \right)^{\beta} \right) \ \\
\ \leq \ & \ \left( 1 + \sum_{j=2}^{n} x_j^{(n-1)(n-2)\beta /2} \right)
\end{aligned}
 $
\end{enumerate}
Therefore, we have 
\begin{align*}
|f_{a}(x_1,\dots,x_n) | \ \leq \ \left|x_1^{\alpha -1} \right|  \left| \prod_{j=2}^{n} {x_j}^{\beta -1}e^{-(n-j+1)x_j/2}  \left( 1 + \sum_{j=2}^{n} x_j^{(n-1)(n-2)\beta /2 + (n-1)\alpha} \right) \right|
\end{align*}
Using the inequality, for any positive real $x,y$ and $p >0$, 
\begin{align*}
(x + y)^{p} \ \leq \ 2^{p} (x^{p} + y^{p})  
\end{align*}
and put $p \ = \ (n-1)\alpha + (n-1)(n-2)\beta /2 > 0$, 
\begin{align*}
g(x_1,\dots,x_n) \  = \ & \ {x_1}^{\alpha-1} \ \prod_{j=2}^{n} {x_j}^{\beta-1} e^{-(n-j+1)x_j /2} \left(2^{p} (1+x_2)^{p}  + \sum_{j=3}^{n} 2^{(j-2)p} {x_j}^{p} \right) 
\end{align*}
Note that, $g$ is a finite sum of integrable function and hence is integrable and, 
$$ |f_{a}(x_1,\dots,x_n) | \ \leq \ \left| g{(x_1,\dots,x_n)} \right| $$
By dominated convergence theorem, in the limit of $a \rightarrow 0$
\begin{align*}
\int_{0}^{1} \int_{0}^{\infty} \dots \int_{0}^{\infty}
f_{a}(x_1,x_2,\dots,x_n)dx_{n}\dots dx_1  \ \approx \ & \ \int_{0}^{1} \int_{0}^{\infty} \dots \int_{0}^{\infty}
f(x_1,x_2,\dots,x_n)dx_{n}\dots dx_1 \ \\
\ = \ &  \ z_{m,n}^{\beta}\ \  \text{ ( $> \ 0$  by the positivity of $f$  ) } \ \\
 \text{Hence, } \  \hspace{5cm}  \\
\mathbb{P}(\lambda_1  <  a) \ \underset{a\rightarrow 0}{\approx} \ & \  a^{\alpha} Z_{m,n}^{\beta} z_{m,n}^{\beta}  \ \\
\ = \ & \ a^{\alpha} C^{\beta}_{m,n}  
\end{align*}
\end{proof}
\subsection{Inverse $(m,n,\beta)$-Laguerre matrix}
We now present the proof of theorem $\ref{thm:beta_moment}$.

\begin{proof}[\textbf{Proof of Theorem \ref{thm:beta_moment}.}]
For any integer $c > 0$, we have
\begin{align}
\displaystyle {\lambda_{1}^{-c}} \ & \ \leq \ \sum_{i=1}^{n} {\lambda_{i}^{-c}} \  \leq  \  n{\lambda_{1}^{-c}}  \nonumber    \\
 \displaystyle   \hspace{2cm} \mathbb{E} \left\{ {\lambda_{1}^{-c}} \right\} \ & \ \leq  \  \mathbb{E} \left\{ Tr(S^{-c}) \right\}  \  \leq  \ n \mathbb{E}\left\{{\lambda_{1}^{-c}} \right\} 
\label{eqn:trace_ev}
\end{align}

$ \ \mathbb{E}\left\{ Tr(S^{-c} \right\} < \infty \text{ if and only if \ }  \displaystyle \mathbb{E} \left\{ {\lambda_{1}^{-c}} \right\} < \infty $ and thus, it is sufficient to find the finiteness condition for $\mathbb{E} \left\{ {\lambda_{1}^{-c}} \right\}$. 

As $\lambda_{1}^{-1}$ is a positive random variable, from lemma \ref{positive_rv}
\begin{align*}
\displaystyle  \mathbb{E} \left\{ {\lambda_{1}^{-c}} \right\} \ = \ & \ \int_{0}^{\infty} c t^{c-1} \mathbb{P}\left({\lambda_1^{-1}} > t \right) dt \ \\
\displaystyle \ = \ & \ \displaystyle  c \  \int_{0}^{\infty} t^{c-1} \mathbb{P}\left(\lambda_1 <	 {t^{-1}} \right)  dt \ \\
\displaystyle \ = \ & \ \displaystyle  c \ \int_{0}^{\infty}  w^{-c-1} \mathbb{P}\left(\lambda_1 <	w \right)  dw     \text{\ \ \ \ \ \ \ \ (put $1/t = w$)} \ \\
\displaystyle \ = \ & \ \displaystyle   c \  \int_{0}^{\delta} w^{-c-1} \mathbb{P}\left(\lambda_1 < w \right) dw +   c \ \int_{\delta}^{\infty}  w^{-c-1} \mathbb{P}\left(\lambda_1 < w \right) dw
\end{align*}
As a result,
\label{eqn:bound}
\begin{enumerate}
\renewcommand{\labelenumi}{(\roman{enumi})}
\item 
$\displaystyle   c \ \int_{0}^{\delta} w^{-c-1} \mathbb{P}\left(\lambda_1 < w \right) dw \ \leq \ \mathbb{E} \left\{ {\lambda_{1}^{-c}} \right\}  $  and, 
\item
$ \displaystyle \mathbb{E} \left\{ {\lambda_{1}^{-c}} \right\} \  \leq \   c \ \int_{0}^{\delta} w^{-c-1} \mathbb{P}\left(\lambda_1 < w \right) dw \ + \   c \ \int_{\delta}^{\infty}  w^{-c-1} dw  $
\end{enumerate}

Consider the inequality (i), 
\begin{align*}
\displaystyle  \mathbb{E} \left\{ {\lambda_{1}^{-c}} \right\} \ \geq \ & \  c \int_{0}^{\delta}  w^{-c-1} \mathbb{P}\left(\lambda_1 < {w} \right) \ dw \ \\
\displaystyle  \approx \ & \ c \  C^{\beta}_{m,n}\int_{0}^{\delta} w^{-c-1} w^{\alpha} \ dw  \\
\displaystyle  = \ & \  \infty \ \ \ \text { if \ } (\alpha - c) \leq 0 
\end{align*}

Evaluating the inequality (ii),
\begin{align*}
\displaystyle \mathbb{E} \left\{ {\lambda_{1}^{-c}} \right\} \ \leq \ & \  c \  \int_{0}^{\delta} w^{-c-1} \mathbb{P}\left(\lambda_1 < w \right) \ dw \ + \  c \ \int_{\delta}^{\infty}  w^{-c-1} \ dw \ \\
\displaystyle  \approx \ & \  c \ C^{\beta}_{m,n} \  \int_{0}^{\delta}  w^{-c-1} w^{\alpha} dw \ + \   c \ \int_{\delta}^{\infty}  w^{-c-1}  dw \ \\
\displaystyle \ < \ & \  \infty \ \ \ \text { if \ } {(\alpha - c )} > 0
\end{align*}
This implies that, $\displaystyle  \hspace{.1cm} \mathbb{E} \left\{ {\lambda_{1}^{-c}} \right\} < \infty$ if and only if $c < \alpha$ and hence, $\mathbb{E}\left\{ Tr(S^{-c}) \right\}$ is finite if and only if $c < (m-n+1)\beta/2$.
\end{proof}

%%%%%%%%%%%%%%%%%%%%%%%%%%%%%%%%%%%%%%%%%%%%%%%%%%%%%
\subsection{Inverse compound Wishart matrix}
\label{sec:complex_compound_wishart}
Let $Q$ be a $n \times n$ compound Wishart matrix. As, $Q$ has the same distribution as $X^*DX$, using the partial order we get, $\xi_1I \preceq D \preceq \xi_mI$, where $I$ is a  $m \times m$ identity matrix and thus, $\xi_1S \preceq Q \preceq \xi_m S$. Let $0 < \mu_1\leq \mu_2\leq \dots\leq \mu_n$ be the ordered eigenvalues of $Q$ then for any real $a >0$,
\begin{align}
\displaystyle \xi_1\lambda_1 \ \leq \  \ \mu_1 \leq \ & \ \xi_m \lambda_1 \nonumber \ \\
\displaystyle \mathbb{P}(\lambda_1 < a{\xi_1}^{-1}) \ \leq \ \mathbb{P}(\mu_1 < a) \leq \ &  \mathbb{P}(\lambda_1 < a{\xi_m}^{-1}) 
\label{eqn:bound_mu} 
\end{align}
From the lemma \ref{lem:gap_prob}, as $a \rightarrow 0$ we get
\begin{align*}
\displaystyle \ \frac{C^{\beta}_{m,n}}{{\xi_1}^{\alpha}} \  a^{\alpha} \ \leq \ \mathbb{P}(\mu_1 < a) \ \leq \ \frac{C^{\beta}_{m,n} }{{\xi_m}^{\alpha}} \ a^{\alpha} \
\end{align*}
From \eqref{eqn:trace_ev}, it is enough to compute the finiteness condition for $\mathbb{E}\{ \mu_1^{-c}\} $ instead of $\mathbb{E}\{Tr( Q^{-c})\}$.

\begin{proof}[\textbf{Proof of Theorem \ref{thm:compound_moment}.}]
Proceeding as in the proof of theorem \ref{thm:beta_moment} and using \eqref{eqn:bound_mu}, 
\begin{align*}
\displaystyle \mathbb{E} \left\{ {\mu_{1}^{-c}} \right\} \ \leq  \  &  \ c \ \int_{0}^{\delta}  w^{-c-1} \mathbb{P}\left(\mu_1 < w \right) dw   \ +   \  c \ \int_{\delta}^{\infty}  w^{-c-1} \ dw \ \\
%%\ \displaystyle  \leq \ &  \ c \ \int_{0}^{\delta}  w^{-c-1} \mathbb{P}\left(\lambda_1 < w / {\xi_m} \right) \ dw  \ + \  c \ \int_{\delta}^{\infty}  w^{-c-1}  \ dw \ \\
\displaystyle \ \leq \ &  \ \frac{c \ C^{\beta}_{m,n}}{\xi_m^{\alpha}} \ \int_{0}^{\delta}  w^{\alpha-c-1}  dw   \ +   \  c \ \int_{\delta}^{\infty}  w^{-c-1}  \ dw \ \\
\displaystyle  \ < \ & \ \infty \  \ \ \ \text { if \ } {(\alpha - c )} > 0
\end{align*}
Computing the lower bound for $\mathbb{E} \left\{ {\mu_{1}^{-c}} \right\}\ $,
\begin{align*}
\displaystyle \mathbb{E} \left\{ {\mu_{1}^{-c}} \right\}\  \geq \ & \ c \ \int_{0}^{\delta}  w^{-c-1} \mathbb{P}\left(\mu_1 < w \right) \ dw \  \\
%%\displaystyle  \ \geq \  &  \ c \ \int_{0}^{\delta}  w^{-c-1} \mathbb{P}\left(\lambda_1 < w / {\xi_1} \right) dw  \ \\
\displaystyle  \ \geq \ & \  \ \frac{c \ C^{\beta}_{m,n}}{\xi_1^{\alpha}} \ \int_{0}^{\delta}  w^{\alpha-c-1}  dw  \ \\
\displaystyle  \ = \ &  \ \infty \  \ \ \ \text { if \ } {(\alpha - c )} \leq 0
\end{align*}

Hence, the result.
\end{proof}

%%%%%%%%%%%%%%%%%%%%%%%%%%%%%%%%%%%%%%%%%%%%%%%%%%%%%%%%%%%%%%%%%%%%%%%%%%%%%%%%%%%%%%%%%%%%%%%%%%%%%%%%%%%%%%%%%%%%%%%%%%%%%%%%
%%%%%%%%%%%%%%%%%%%%%%%%%%%%%%%%%%%%%%%%%%%%%%%%%%%%%%%%%%%%%%%%%%%%%%%%%%%%%%%%%%%%%%%%%%%%%%%%%%%%%%%%%%%%%%%%%%%%%%%%%%%%%%%%
\section{Conclusion}

The lemma \ref{lem:gap_prob} and theorem \ref{thm:beta_moment} give two important information: firstly 
it investigates the behaviour of the smallest eigenvalue of $(m,n,\beta)$-Laguerre matrices near zero and, secondly it state the necessary and sufficient conditon for the existence of finite moments for inverse $(m,n,\beta)$-Laguerre matrices. In simpler words, if we are provided the value of $m,n$ and $\beta$ then we can tell whether the $c$-th moment of the inverse of a $(m,n,\beta)$-Laguerre matrix is finite or not and the same observation follows for the moments of smallest eigenvalue of a $(m,n,\beta)$-Laguerre matrix. 
We can summarise our result for matrix $S$ as:

 \emph{ $\displaystyle \mathbb{E}\{Tr(S^{-c})\} < \infty $ if and only if $\displaystyle  c < (m-n+1)\beta /2 $ \ i.e. the finite integer moments of the inverse of a $(m,n,\beta)$-Laguerre matrix lies in the interval $ \left(0, (m-n+1)\beta/2 \right)$. }
		
	We study the compound Wishart matrix for the values $\beta =1$ and $2$. As, compound Wishart matrices are the natural generalisation to Wishart matrices, so the result can be extended easily to compound Wishart matrices. The $c$-th moment for the inverse of a compound Wishart matrix exist if and only if $c < (m-n+1)\beta /2 $ and thus, we obtain that the finite integer moments lies in $\displaystyle \left(0, (m-n+1)\beta/2 \right)$. 
	
	Recently, the negative moments of $(m,n,\beta)$-Laguerre matrices has been studied in \cite{Mezzadri_Reynolds_Winn_2017}. Our results are consistent and complete with the other results on the moments of $(m,n,\beta)$-Laguerre matrices and compound Wishart matrices as in \cite{Letac_Massam_2004}, \cite{Matsumoto_2012}, \cite{Mezzadri_Reynolds_Winn_2017} and \cite{Collins_Matsumoto_Saad_2014}. 
	In the general $\beta$ case, there is no well defined notion of compound Wishart matrix, which explains why we focused on compound Wishart case for the values of $\beta = 1$ and $2$. However we expect that our results can be extended to  more general matrix models involving Wishart matrices and leave it for the future work.  \ \\

%%%%%%%%%%%%%%%%%%%%%%%%%%%%%%%%%%%%%%%%%%%%%%%%%%%%%%%%%%%%%%%%%%%%%%%%%%%%%%%%%%%%%%%%%%%%%%%%%%%%%%%%%%%%%%%%%%%%%%%%%%%%%
%%%%%%%%%%%%%%%%%%%%%%%%%%%%%%%%%%%%%%%%%%%%%%%%%%%%%%%%%%%%%%%%%%%%%%%%%%%%%%%%%%%%%%%%%%%%%%%%%%%%%%%%%%%%%%%%%%%%%%%%%%%%%
\textbf{Acknowledgement}

I would like to express my sincere gratitude to Prof. Benoit Collins, my PhD supervisor, for his guidance. His constant encouragement and useful comments has helped me a lot. I am also grateful to Prof. Matsumoto for his insightful suggestions and comments on the preliminary version of this manuscript. I am thankful to all those researchers whose work has motivated me.

%%%%%%%%%%%%%%%%%%%%%%%%%%%%%%%%%%%%%%%%%%%%%%%%%%%%%%%%%%%%%%%%%%%%%%%%%%%%%%%%%%%%%%%%%%%%%%%%%%%%%%%%%%%%%%%%%%%%%%%%%%%%%
%%%%%%%%%%%%%%%%%%%%%%%%%%%%%%%%%%%%%%%%%%%%%%%%%%%%%%%%%%%%%%%%%%%%%%%%%%%%%%%%%%%%%%%%%%%%%%%%%%%%%%%%%%%%%%%%%%%%%%%%%%%%%
\bibliographystyle{IEEEtrans} 
\bibliography{Moments_of_beta_laguerre_matrices}

% Generated by IEEEtranS.bst, version: 1.14 (2015/08/26)
\begin{thebibliography}{10}
\providecommand{\url}[1]{#1}
\csname url@samestyle\endcsname
\providecommand{\newblock}{\relax}
\providecommand{\bibinfo}[2]{#2}
\providecommand{\BIBentrySTDinterwordspacing}{\spaceskip=0pt\relax}
\providecommand{\BIBentryALTinterwordstretchfactor}{4}
\providecommand{\BIBentryALTinterwordspacing}{\spaceskip=\fontdimen2\font plus
\BIBentryALTinterwordstretchfactor\fontdimen3\font minus
  \fontdimen4\font\relax}
\providecommand{\BIBforeignlanguage}[2]{{%
\expandafter\ifx\csname l@#1\endcsname\relax
\typeout{** WARNING: IEEEtranS.bst: No hyphenation pattern has been}%
\typeout{** loaded for the language `#1'. Using the pattern for}%
\typeout{** the default language instead.}%
\else
\language=\csname l@#1\endcsname
\fi
#2}}
\providecommand{\BIBdecl}{\relax}
\BIBdecl

\bibitem{Collins_Matsumoto_Saad_2014}
B.~Collins, S.~Matsumoto, and N.~Saad, ``{Integration of invariant matrices and
  moments of inverses of Ginibre and Wishart matrices},'' \emph{Journal of
  Multivariate Analysis}, pp. 1--13, 2014.

\bibitem{Dumitriu_Edelman_2002}
I.~Dumitriu and A.~Edelman, ``Matrix models for beta ensembles,'' \emph{Journal
  of Mathematical Physics}, vol.~43, pp. 5830--5847, 2002.

\bibitem{Goodman_1963}
N.~R. Goodman, ``{Statistical analysis based on a certain multivariate complex
  Gaussian distribution (An Introduction)},'' \emph{Ann. Math. Statist.},
  vol.~34, no.~1, pp. 152--177, 1963.

\bibitem{Graczyk_Letac_Massam_2003}
P.~Graczyk, G.~Letac, and H.~Massam, ``The complex {W}ishart distribution and
  the symmetric group,'' \emph{Ann. Statist.}, vol.~31, no.~1, pp. 287--309,
  2003.

\bibitem{Horn_Johnson_1986}
R.~A. Horn and C.~R. Johnson, Eds., \emph{Matrix Analysis}.\hskip 1em plus
  0.5em minus 0.4em\relax Cambridge University Press, 1986.

\bibitem{Letac_Massam_2004}
G.~Letac and H.~Massam, ``{All Invariant Moments of the {W}ishart
  Distribution},'' \emph{Scand. J. Statist.}, vol.~31, no.~2, pp. 295--318,
  2004.

\bibitem{Matsumoto_2012}
S.~Matsumoto, ``{General moments of the inverse real Wishart distribution and
  orthogonal Weingarten functions},'' \emph{Journal of Theoretical
  Probability}, vol.~25, no.~3, pp. 798--822, 2012.

\bibitem{Mezzadri_Reynolds_Winn_2017}
F.~Mezzadri, A.~K. Reynolds, and B.~Winn, ``Moments of the eigenvalue densities
  and of the secular coefficients of {$\beta$} -ensembles,''
  \emph{Nonlinearity}, vol.~30, no.~3, p. 1034, 2017.

\bibitem{Ross_2006}
S.~M. Ross, \emph{Introduction to Probability Models, Ninth Edition}.\hskip 1em
  plus 0.5em minus 0.4em\relax Academic Press, Inc., 2006.

\bibitem{Speicher_1998}
R.~Speicher, \emph{{Combinatorial theory of the free product with amalgamation
  and operator-valued free probability theory}}, ser. American Mathematical
  Society: Memoirs of the American Mathematical Society.\hskip 1em plus 0.5em
  minus 0.4em\relax American Mathematical Soc., 1998.

\bibitem{Wishart_1928}
J.~Wishart, ``{The generalised product moment distribution in samples from a
  normal multivariate population},'' \emph{Biometrika}, vol. 20A, pp. 32--52,
  1928.

\end{thebibliography}
%%%%%%%%%%%%%%%%%%%%%%%%%%%%%%%%%%%%%%%%%%%%%%%%%%%%%%%%%%%%%%%%%%%%%%%%%%%%%%%%%%%%%%%%%%%%%%%%%%%%%%%%%%%%%%%%%%%%%%%%%%%%
%%%%%%%%%%%%%%%%%%%%%%%%%%%%%%%%%%%%%%%%%%%%%%%%%%%%%%%%%%%%%%%%%%%%%%%%%%%%%%%%%%%%%%%%%%%%%%%%%%%%%%%%%%%%%%%%%%%%%%%%%%%%
\end{document}